\theoremstyle{plain}
\newtheorem{theorem}{Theorem}[section]
\newtheorem*{theorem*}{Theorem}
\newtheorem{lemma}[theorem]{Lemma}
\newtheorem{corollary}[theorem]{Corollary}
\theoremstyle{definition}
\newtheorem{definition}[theorem]{Definition}
\theoremstyle{remark}
\numberwithin{equation}{section}
\title{Furstenberg boundary of minimal actions}
\author{Zahra Naghavi}
\address{Department of Mathematics, Faculty of Mathematical Sciences, Tarbiat Modares University, Tehran 14115-134, Iran}
\email{z.naghavi@modares.ac.ir, naghavi.zahra@gmail.com}%
\subjclass[2010]{46L35, 37A55} \keywords{countable discrete group, $\Gamma$-injective envelope, Furstenberg boundary, universal minimal $\Gamma$-space, exact group}
\begin{document}


\begin{abstract}
For a countable discrete group $\Gamma$ and a minimal $\Gamma$-space $X$, we study the notion of $(\Gamma, X)$-boundary, which is a natural generalization of the notion of topological $\Gamma$-boundary in the sense of Furstenberg. We give characterizations of the $(\Gamma, X)$-boundary in terms of essential or proximal extensions. The characterization is used to answer a problem of Hadwin and Paulsen in negative. As an application, we find necessary and sufficient condition for the corresponding reduced crossed product to be exact.
\end{abstract}
\maketitle
\section{Introduction}
The notion of (topological) $\Gamma$-boundaries of a group $\Gamma$ were introduced in the 60's by Furstenberg \cite{f63}. This notion was used to be considered  as a tool to study rigidity problems in the context of semisimple Lie groups. The pioneering work of Kalantar and Kennedy \cite{kk14}, showed the key role of Furstenburg boundary in certain problems in operator algebras (see also, \cite{bkko17} and \cite{lb17}).

There are several natural generalizations of the notion of Furstenberg boundary, including that of  Bearden and Kalantar \cite{bk19}, Monod \cite{m19}, Kennedy and Schafhauser \cite{ks17}, Amini and Behrouzi \cite{ab16} and Borys \cite{b19}. In this paper, we introduce a dynamical version of the boundary for minimal  actions on compact spaces. This is essential when one deals with the notion of minimality in dynamical setting \cite{h85}, \cite{hp11}.

Throughout this paper $\Gamma$ is a countable discrete group, unless otherwise stated. Let $Y$ be a $\Gamma$-boundary, then by a result of Kalantar and Kennedy  $C(Y)$ can be considered as a $\Gamma$-essential extension of $\mathbb{C}$ \cite{kk14}. This especially tells us that any $\Gamma$-boundary can be observed as a boundary of the trivial $\Gamma$-space. On the other hand, the notion of $\Gamma$-boundary, as a minimal strongly proximal $\Gamma$-space, can be extended to the notion of minimal strongly proximal extension of a $\Gamma$-space. The latter is introduced by Glasner in \cite{g75}. In this paper, we generalize the notion of $\Gamma$-boundary through the following characterization (c.f., Theorem \ref{a}).

\vspace{.3cm}
{\bf Theorem A.}
For a countable discrete group $\Gamma$, let $X$ be a minimal $\Gamma$-space and $(Y, \varphi)$ be an extension of $X$, inducing an extension $(C(Y), \tilde{\varphi})$ of $C(X)$. The following are equivalent:
\begin{enumerate}
	\item $(C(Y), \tilde{\varphi})$ is a $\Gamma$-essential extension of $C(X)$.
	\item $Y$ is minimal and, for every $\nu\in \mathrm{Prob}(Y)$, if the restriction of Poisson map $\mathcal{P}_{\nu} : C(Y)\rightarrow\ell^{\infty}(\Gamma)$ to $C(X)$ via $\tilde{\varphi}$ is isometric, then $\mathcal{P}_{\nu}$ is isometric on $C(Y)$.
	\item $Y$ is minimal and for every $\nu\in \mathrm{Prob}(Y)$, if the push forward of $\nu$  on $X$ via $\varphi$ is contractible, then $\nu$ is contractible.
	\item $(Y, \varphi)$ is a minimal strongly proximal extension of $X$.
\end{enumerate}

We say that $(Y, \varphi)$ is a \emph{$(\Gamma, X)$-boundary} (or an \emph{$X$-boundary} for short) if it satisfies any of the above equivalent conditions.

Though we focus on minimal $\Gamma$-spaces, the above theorem (excluding the last item) holds for arbitrary $\Gamma$-spaces as well (dropping the minimality  assumption on  $Y$ in  $(2)$ and $(3)$).  We also point out that our definition of a $(\Gamma, X)$-boundary is equivalent to the definition proposed by Kennedy and Schafhauser (\cite[Remark 2.3, Corollary 2.7]{ks17}).

To make the construction of $(\Gamma, X)$-boundaries somewhat clearer, we completely describe them when $X$ is minimal and finite. Indeed we show that for a minimal  finite $\Gamma$-space $X$, any $(\Gamma, X)$-boundary can be characterized by  induced action of some $\Lambda$-boundary, where $\Lambda$ is a subgroup of $\Gamma$ of finite index  (cf, Theorem \ref{39}).

\vspace{.2cm}
{\bf Theorem B.}
Let $\Gamma$ be a countable discrete group, and let $\Lambda$ be a finite index subgroup of $\Gamma$. If $Y$ is a $\Lambda$-boundary, the induced $\Gamma$-space $\tilde{Y}$ is a $(\Gamma, \Gamma/\Lambda)$-boundary. Conversely, for a minimal finite $\Gamma$-space $X$, every $(\Gamma, X)$-boundary is the induced $\Gamma$-space of a $\Lambda$-boundary, for some finite index subgroup $\Lambda\leq\Gamma$. In particular, when $X$ is finite, the universal $(\Gamma, X)$-boundary $\partial_{F}(\Gamma, X)$ is the induced $\Gamma$-space of the Furstenberg boundary $\partial_{F}\Lambda$, for some subgroup $\Lambda\leq\Gamma$ of finite index.
\vspace{.2cm}

Hadwin and Paulsen in \cite{hp11} asked the following question: Let $\Gamma$ be a countable discrete group and $X$ be a minimal $\Gamma$-space. For the universal minimal $\Gamma$-space $L$, is $C(L)$ the $\Gamma$-injective envelope of $C(X)$? Using the notion of $(\Gamma, X)$-boundary, we give a  negative answer to this question as follows (c.f., Theorem \ref{c}).

\vspace{.2cm}
{\bf Theorem C.}
If $L$ is the universal minimal $\Gamma$-sapce and $Y$ is an $X$-boundary for a minimal finite $\Gamma$-space $X$, then $I_{\Gamma}(C(Y))\ncong C(L)$.
\vspace{.2cm}

Finally we use $(\Gamma, X)$-boundaries to study the problem of exactness for  the corresponding reduced crossed product $C(X)\rtimes_{r}\Gamma$. A discrete group $\Gamma$ is exact if the reduced group $C^{*}$-algebra $C^{*}_{r}(\Gamma)$ is exact. We  show that if $\Gamma$ is exact and $C(X)$ is $\Gamma$-injective, the action $\Gamma\curvearrowright X$ is amenable. In particular,  for the universal $(\Gamma, X)$-boundary $\partial_{F}(\Gamma, X)$, we have  the following (c.f., Theorem \ref{b}).

\vspace{.2cm}
{\bf Theorem D.}
Let $\Gamma$ be a countable discrete group. The following  are equivalent:
\begin{enumerate}
\item[(1)] $\Gamma$ is exact,
\item[(2)] For every minimal $\Gamma$-space $X$, the $\Gamma$-action on $\partial_{F}(\Gamma, X)$ is amenable,
\item[(3)] For every minimal $\Gamma$-space $X$, $C(\partial_{F}(\Gamma, X))\rtimes_{r}\Gamma$ is nuclear,
\item[(4)] For every minimal $\Gamma$-space $X$, $C(X)\rtimes_{r}\Gamma$ is exact.
\end{enumerate}
\vspace{.2cm}

The paper is organized as follows. In addition to this introduction, we have four other sections. In Section 2, we briefly review the background material. In Section 3, we discuss topological $\Gamma$-boundaries and introduce the notion of $(\Gamma, X)$-boundary for a $\Gamma$-space $X$. We show that the $(\Gamma, X)$-boundaries are the same as minimal strongly proximal extensions of $X$. This is employed to deal with the $(\Gamma, X)$-boundaries of finite $\Gamma$-space $X$, which in turn provides a negative answer to the Hadwin-Paulsen problem in Section 4. In section 5, we find conditions for the exactness of the reduced crossed product $C(X)\rtimes_{r}\Gamma$.

\subsection*{Acknowledgements}
The author is grateful to Mehrdad Kalantar for showing her the problem of Hadwin and Paulsen, as well as many helpful discussions which improved the exposition of this paper. Most of this work was completed when the author was visiting University of Houston. She would like to thank Mehrdad Kalantar for invitation and Department of Mathematics of University of Houston for  warm hospitality. She is deeply indebted to Massoud Amini, Tattwamasi Amrutam and David Kerr for valuable comments and discussions.

\section{Preliminaries}

Let $\Gamma$ be a discrete group. A compact Hausdorff space $X$ is a \emph{$\Gamma$-space} if there is a group homomorphism from $\Gamma$ into the group of homeomorphisms on $X$. In this case we write $\Gamma\curvearrowright X$. For $s\in \Gamma$ and $x\in X$ we  denote   the image of $x$ under $s$ by $sx$. The action $\Gamma\curvearrowright X$ induces an action on the algebra $C(X)$ of continuous functions on $X$ given by
$$(sf)(x)=f(s^{-1}x),\ \ \ (s\in \Gamma,\ f\in C(X), \ x\in X).$$
Similarly, $\Gamma$ acts on the set $\mathrm{Prob}(X)$ of probability measures on X via
$$s\nu (Y)=\nu (s^{-1}Y),\ \ \ (s\in \Gamma,\ \nu \in \mathrm{Prob}(X), \ Y\in\mathcal B_X).$$
A map $\varphi : Y\rightarrow X$ between $\Gamma$-spaces is a \emph{$\Gamma$-map} when $\varphi$ is continuous and $\varphi(sy)=s\varphi(y)$, for each $y\in Y$ and $s\in\Gamma$. If $\varphi : Y\rightarrow X$ is a surjective $\Gamma$-map, the pair $(Y, \varphi)$ is called an \emph{extension} (or, in some texts, a cover) of $X$. We also refer to $Y$ or $\varphi$ as an extension of $X$.

A $\Gamma$-space $Y$ is \emph{minimal} if for every $y\in Y$, the $\Gamma$-orbit $\Gamma y=\{ ty \mid t\in \Gamma\}$ is dense in $Y$, and \emph{strongly proximal} if for every probability measure $\nu\in \mathrm{Prob}(Y)$, the weak* closure of the $\Gamma$-orbit $\Gamma\nu=\{s\nu : s\in\Gamma\}$ contains a point mass $\delta_{y}\in \mathrm{Prob}(Y)$, for some $y\in Y$. A $\Gamma$-space $Y$ is said to be a \emph{$\Gamma$-boundary} if $Y$ is both minimal and strongly proximal. Furstenberg in \cite{f73} proved that every group $\Gamma$ has a unique $\Gamma$-boundary $\partial _{F}\Gamma$, which is universal, in the sense that every $\Gamma$-boundary is an image of $\partial _{F}\Gamma$.

Consider the Stone-\v{C}ech compactification $\beta \Gamma$ of $\Gamma$. The action $\Gamma\curvearrowright\beta\Gamma$ induces a semigroup structure on $\beta \Gamma$. A subset $I$ of $\beta \Gamma$ is a  {\it left ideal} if $(\beta \Gamma)I\subseteq I$. By Zorn lemma,  $\beta \Gamma$ has a minimal left ideal which is unique up to homeomorphism \cite[2.9]{hs98}. We here denote this  $\Gamma$-space by $L$. It is known that $L$ is the universal minimal $\Gamma$-space \cite[I.4]{g76}, i.e., every minimal $\Gamma$-space is an image of $L$ through a surjective $\Gamma$-map. In addition $L$ is $\Gamma$-projective \cite[3.17]{hp11}, in the sense that, for any $\Gamma$-spaces $X$ and $Y$,  any $\Gamma$-map $\psi : L\rightarrow X$, and any surjective $\Gamma$-map $\varphi : Y\rightarrow X$, there exists a $\Gamma$-map $\theta : L\rightarrow Y$ such that $\varphi\theta=\psi$.

An \emph{operator system} $\mathcal{V}$ is a unital self-adjoint subspace of a unital $C^{*}$-algebra. We say that $\mathcal{V}$ is a \emph{$\Gamma$-operator system} if there is a homomorphism from $\Gamma$ into the group of order isomorphisms of $\mathcal{V}$. A linear map $\phi : \mathcal{V}\rightarrow\mathcal{W}$ between $\Gamma$-operator systems is \emph{unital} if $\phi(1_{\mathcal{V}})=1_{\mathcal{W}}$, it is \emph{positive} if it sends positive elements to positive elements, and \emph{completely positive} (\emph{completely isometric}) if the maps $id\otimes\phi : M_{n}(\mathbb{C})\otimes\mathcal{V}\rightarrow M_{n}(\mathbb{C})\otimes\mathcal{W}$ are positive (isometric), for all $n\in\mathbb{N}$. We call $\phi : \mathcal{V}\rightarrow\mathcal{W}$ a \emph{$\Gamma$-map} if it is unital completely positive and $\Gamma$-equivariant, that is $\phi(sv)=s\phi(v)$, for each $s\in\Gamma$ and $v\in\mathcal{V}$. If a $\Gamma$-map $\phi: \mathcal{V}\rightarrow\mathcal{W}$ is completely isometric, the pair $(\mathcal{W}, \phi)$ is called an \emph{extension} of $\mathcal{V}$. In this case, we also refer to $\mathcal{W}$ or $\phi$ as an extension of $\mathcal{V}$. An extension $(\mathcal{W}, \phi)$ of $\mathcal{V}$ is \emph{$\Gamma$-essential} if for every $\Gamma$-map $\psi : \mathcal{W}\rightarrow\mathcal{U}$ such that $\psi\phi$ is completely isometric on $\mathcal{V}$, $\psi$ is completely isometric on $\mathcal{W}$. It is \emph{$\Gamma$-rigid} if for every $\Gamma$-map $\psi : \mathcal{W}\rightarrow\mathcal{W}$ such that $\psi\phi=\phi$ on $\mathcal{V}$, $\psi$ is the identity map on $\mathcal{W}$.

A $\Gamma$-operator system $\mathcal{U}$ is  \emph{$\Gamma$-injective} if for every $\Gamma$-map $\phi : \mathcal{V}\longrightarrow\mathcal{U}$ and every extension $\iota :\mathcal{V}\longrightarrow\mathcal{W}$, there is a $\Gamma$-map $\psi : \mathcal{W}\longrightarrow\mathcal{U}$ such that $\psi\iota =\phi$. Given a $\Gamma$-operator system $\mathcal{V}$, we say that $(\mathcal{I},\kappa)$ is the \emph{$\Gamma$-injective envelope} of $\mathcal{V}$ provided that $\mathcal{I}$ is $\Gamma$-injective and $(\mathcal{I}, \kappa)$ is an extension of $\mathcal{V}$ such that for any other $\Gamma$-injective $\Gamma$-operator system $\mathcal{I}_{1}$ with $\kappa(\mathcal{V})\subseteq \mathcal{I}_{1}\subseteq \mathcal{I}$, we have $\mathcal{I}_{1} = \mathcal{I}$. In the other words, $(\mathcal{I}, \kappa)$ is the $\Gamma$-injective envelope of $\mathcal{V}$ if $(\mathcal{I}, \kappa)$ is a minimal $\Gamma$-injective extension of $\mathcal{V}$. Hamana showed that for a discrete group $\Gamma$ and a $\Gamma$-operator system $\mathcal{V}$, the $\Gamma$-injective envelope always exists and is unique up to complete isometric $\Gamma$-equivariant isomorphism. In addition, the $\Gamma$-injective envelope is exactly the maximal $\Gamma$-essential extension, and it is $\Gamma$-rigid \cite[2.4, 2.6]{h85}. We denote the $\Gamma$-injective envelope of a $\Gamma$-operator system $\mathcal{V}$ by $I_{\Gamma}(\mathcal{V})$. If $\mathcal{A}$ is a $\Gamma$-$C^{*}$-algebra, an extension of $\mathcal{A}$ is a $\Gamma$-$C^{*}$-algebra containing $\mathcal{A}$ through a $\Gamma$-equivariant $*$-monomorphism. In this case, $I_{\Gamma}(\mathcal{A})$, which is the minimal $\Gamma$-injective extension of $\mathcal{A}$ in category of $\Gamma$-operator systems, is a $\Gamma$-$C^{*}$-algebra under the Choi-Effros product \cite{ce77}. In particular, for a $\Gamma$-space $X$, $I_{\Gamma}(C(X))$ is a commutative $\Gamma$-$C^{*}$-algebra. 

If $X$ is a $\Gamma$-space, for every $\nu\in \mathrm{Prob}(X)$, the $\Gamma$-map $\mathcal{P}_{\nu} :C(X)\rightarrow \ell^{\infty}(\Gamma)$, called the \emph{Poisson map}, is defined as follows:

$$\mathcal{P}_{\nu}(f)(s)=\langle f, s\nu \rangle=\int_{X}f(sx)d\nu (x),\ \ (s\in\Gamma, \ f\in C(X)).$$

Every $\Gamma$-map $\varphi:C(X)\rightarrow \ell^{\infty}(\Gamma)$ is a Poisson map for some probability measure on $X$: for $\nu=\tilde{\varphi}(\delta_{e})$,  where $\tilde{\varphi}$ is the adjoint of $\varphi$ and $\delta_{e}$ is the Dirac measure at the identity element of $\Gamma$, we have $\varphi=\mathcal{P}_{\nu}$.

\section{boundary extensions}

Kalantar and Kennedy proved in \cite{kk14} that $I_{\Gamma}(\mathbb{C})$, as the maximal $\Gamma$-essential extension of $\mathbb{C}$, can be identified with $C(\partial_{F}\Gamma)$, for the universal $\Gamma$-boundary $\partial_{F}\Gamma$. In particular,  a $\Gamma$-space $Y$ is a $\Gamma$-boundary precisely when $C(Y)$ is a $\Gamma$-essential extension of $\mathbb{C}$. We wish to replace $\mathbb{C}$ by $C(X)$, for a minimal $\Gamma$-space $X$. For this, let us first review the construction of Kalantar and Kennedy.

For a $\Gamma$-space $Y$, $C(Y)$ is a $\Gamma$-essential extension of $\mathbb{C}$, if for any $\Gamma$-operator system $\mathcal{V}$, every $\Gamma$-map $\theta : C(Y)\rightarrow\mathcal{V}$ is isometric. Let us note that one need to  verify this only for the $\Gamma$-operator system $\ell^{\infty}(\Gamma)$: if $\tau$ is in the state space of $\mathcal{V}$, there is a $\Gamma$-map $\mathcal{P}_{\tau} : \mathcal{V}\rightarrow \ell^{\infty}(\Gamma)$, given by $\mathcal{P}_{\tau}(v)(t)=\langle v, t\tau\rangle$, and $\mathcal{P}_{\tau}\theta$ is isometric. On the other hand, every $\Gamma$-map from $C(Y)$ into $\ell^{\infty}(\Gamma)$ is a Poisson map for some probability measure on $Y$. Hence $C(Y)$ is a $\Gamma$-essential extension of $\mathbb{C}$ exactly when for every $\nu\in\mathrm{Prob}(Y)$, every Poisson map $\mathcal{P}_{\nu} : C(Y)\rightarrow\ell^{\infty}(\Gamma)$ is isometric. In \cite[Th\'eor\`eme I.2]{a70}, Azencott showed that for a measure $\nu\in \mathrm{Prob}(Y)$, the Poisson map $\mathcal{P}_{\nu} : C(Y) \rightarrow \ell^{\infty}(\Gamma)$ is isometric if and only if $\{\delta_{y} : y\in Y\}\subseteq \overline{\Gamma\nu}$ in weak* topology. If a measure has this property, we say that it is \emph{contractible}. Note that if $Y$ is minimal, all measures in $\mathrm{Prob}(Y)$ are contractible precisely when $Y$ is strongly proximal. This is to say that $C(Y)$ is a $\Gamma$-essential extension of $\mathbb{C}$ if and only if $Y$ is a $\Gamma$-boundary. In particular, by considering the contravariant functor between $\Gamma$-spaces and commutative $\Gamma$-$C^{*}$-algebras, the maximal $\Gamma$-essential extension of $\mathbb{C}$ is $C(\partial_{F}\Gamma)$.

Next definition is due to Glasner \cite[page 163]{g75}.

\begin{definition}
 	Let $X$ be a $\Gamma$-space and $\varphi : Y\rightarrow X$ be an extension of $X$.
 \begin{enumerate}
 		\item[(i)] $(Y, \varphi)$ is called a \emph{minimal} extension if $Y$ is minimal.
 		\item[(ii)] $(Y, \varphi)$ is called a \emph{strongly proximal} extension if for every $\nu\in \mathrm{Prob}(Y)$ with $\textrm{supp}(\nu)\subseteq \varphi^{-1}(x)$, for some $x\in X$, $\delta_{y}\in\overline{\Gamma\nu}$, for some $y\in Y$.
 \end{enumerate}
\end{definition}

 	When $X$ is singleton with trivial action, the minimal strongly proximal extensions of $X$ are exactly topological boundaries in the sense of Furstenberg.
Following the above observations, we are lead to introduce a generalization of the notion of topological $\Gamma$-boundaries.
\begin{theorem}\label{a}
For a countable discrete group $\Gamma$, let $X$ be a minimal $\Gamma$-space and $(Y, \varphi)$ be an extension of $X$, inducing an extension $(C(Y), \tilde{\varphi})$ of $C(X)$. The following are equivalent:
\begin{enumerate}
	\item $(C(Y), \tilde{\varphi})$ is a $\Gamma$-essential extension of $C(X)$.
	\item $Y$ is minimal and, for every $\nu\in \mathrm{Prob}(Y)$, if the restriction of Poisson map $\mathcal{P}_{\nu} : C(Y)\rightarrow\ell^{\infty}(\Gamma)$ to $C(X)$ via $\tilde{\varphi}$ is isometric, then $\mathcal{P}_{\nu}$ is isometric on $C(Y)$.
	\item $Y$ is minimal and for every $\nu\in \mathrm{Prob}(Y)$, if the push forward of $\nu$  on $X$ via $\varphi$ is contractible, then $\nu$ is contractible.
	\item $(Y, \varphi)$ is a minimal strongly proximal extension of $X$.
\end{enumerate}
\end{theorem}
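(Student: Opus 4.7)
The plan is to prove the equivalences along the cycle $(1)\Leftrightarrow(2)$, $(2)\Leftrightarrow(3)$, $(3)\Leftrightarrow(4)$. The two middle equivalences are essentially formal consequences of the preliminary observations in the paper (Azencott's theorem and the fact that $\delta_x$ is contractible whenever $\overline{\Gamma x}=X$); the substance lies in $(1)\Leftrightarrow(2)$, where one reduces arbitrary $\Gamma$-maps $\theta:C(Y)\to\mathcal{V}$ to Poisson maps, adapting the Kalantar--Kennedy strategy to the relative setting over $C(X)$.

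For $(2)\Leftrightarrow(3)$, observe that the pullback of $\mathcal{P}_{\nu}$ along $\tilde{\varphi}$ is exactly $\mathcal{P}_{\varphi_{*}\nu}$, so Azencott's theorem, which equates isometry of $\mathcal{P}_\mu$ with contractibility of $\mu$, turns the two statements into one another. For $(3)\Leftrightarrow(4)$, I use that $\delta_x$ is contractible on a minimal $X$ for every $x\in X$ (since $X=\overline{\Gamma x}$). Direction $(3)\Rightarrow(4)$: a measure $\nu$ with $\mathrm{supp}(\nu)\subseteq\varphi^{-1}(x)$ has $\varphi_*\nu=\delta_x$ contractible, so by $(3)$ it is contractible, giving $\delta_y\in\overline{\Gamma\nu}$ for some $y$. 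Direction $(4)\Rightarrow(3)$: given $\nu$ with $\varphi_*\nu$ contractible, pick a net $s_\alpha\in\Gamma$ with $\varphi_*(s_\alpha\nu)\to\delta_x$ for any fixed $x\in X$; by weak$^{*}$ compactness extract a subnet with $s_\alpha\nu\to\mu$; then $\mathrm{supp}(\mu)\subseteq\varphi^{-1}(x)$, so $(4)$ produces $\delta_{y'}\in\overline{\Gamma\mu}\subseteq\overline{\Gamma\nu}$, and minimality of $Y$ propagates this single point mass to all of $\{\delta_y:y\in Y\}$, whence $\nu$ is contractible.

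For $(1)\Leftrightarrow(2)$: the direction $(1)\Rightarrow(2)$ comes from specializing $\mathcal{V}=\ell^\infty(\Gamma)$, together with a contradiction argument for the minimality of $Y$: a proper closed minimal invariant $Y_0\subseteq Y$ would, by minimality of $X$, still surject onto $X$, so the restriction map $C(Y)\to C(Y_0)$ would be a $\Gamma$-map completely isometric on $C(X)$ (as a composition of $*$-homomorphisms) but with nonzero kernel, contradicting essentiality. For $(2)\Rightarrow(1)$, given a $\Gamma$-map $\theta:C(Y)\to\mathcal{V}$ with $\theta\circ\tilde{\varphi}$ completely isometric, I want to show $\theta$ is completely isometric. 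The key step is, for each $x\in X$, to manufacture a state $\tau$ on $\mathcal{V}$ whose pullback along the isometric copy $\theta\circ\tilde{\varphi}:C(X)\hookrightarrow\mathcal{V}$ is the point evaluation $\delta_x$: define the state on the subsystem $\theta(\tilde{\varphi}(C(X)))$ via the identification with $C(X)$, and extend by Hahn--Banach to $\mathcal{V}$. Then $\mathcal{P}_\tau\circ\theta=\mathcal{P}_\nu$ for some $\nu\in\mathrm{Prob}(Y)$ with $\varphi_*\nu=\delta_x$, which is contractible; so $(2)$ forces $\mathcal{P}_\nu$ isometric on $C(Y)$, and chasing norms yields $\|g\|=\|\mathcal{P}_\nu(g)\|=\sup_s|\tau(s\theta(g))|\le\|\theta(g)\|\le\|g\|$.

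The main obstacle is the state-extension step in $(2)\Rightarrow(1)$: one has to check that Hahn--Banach genuinely produces a positive functional (so that the Poisson machinery applies), and then promote isometry to \emph{complete} isometry, which is the notion used in the definition of $\Gamma$-essentiality. The latter should follow by running the same state-extension argument on the matrix amplifications $M_n(\mathcal{V})$ and $M_n(C(Y))$, since the Poisson-map construction is compatible with matrix levels, but the bookkeeping through $\Gamma$-equivariance at each level is where I expect the most care to be required.
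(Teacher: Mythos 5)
Your proposal is correct, and three of the four implications ($(2)\Leftrightarrow(3)$ via Azencott, $(3)\Leftrightarrow(4)$ via the identification of $\{\varphi_{*}\nu=\delta_{x}\}$ with $\{\mathrm{supp}(\nu)\subseteq\varphi^{-1}(x)\}$ plus the weak$^{*}$ compactness/minimality argument, and $(2)\Rightarrow(1)$ via the state-extension and Poisson-map reduction) coincide with the paper's proof. The one place you genuinely diverge is the minimality of $Y$ in $(1)\Rightarrow(2)$: the paper invokes the universal minimal $\Gamma$-space $L$ and its $\Gamma$-projectivity to produce a $\Gamma$-map $\pi:L\rightarrow Y$ lifting $\psi:L\rightarrow X$, and deduces surjectivity of $\pi$ from essentiality; you instead take a proper closed invariant $Y_{0}\subseteq Y$, note $\varphi(Y_{0})=X$ by minimality of $X$, and observe that restriction $C(Y)\rightarrow C(Y_{0})$ is a $\Gamma$-map isometric on $C(X)$ but not injective. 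Your version is more elementary (no appeal to $\beta\Gamma$, minimal left ideals, or projectivity) and equally valid; the paper's version buys nothing extra here beyond reusing machinery it needs elsewhere. On the two worries you flag at the end: the positive extension is standard (the functional $g\mapsto\bigl((\theta\tilde{\varphi})^{-1}g\bigr)(x)$ is unital and contractive on the subsystem, and any norm-one unital Hahn--Banach extension to $\mathcal{V}$ is automatically a state, so positivity on the subspace need not be checked separately); and the promotion from isometric to completely isometric, which the paper silently elides, follows as you suggest because for a contractible $\nu$ the convergence $s_{\alpha}\nu\rightarrow\delta_{y}$ applied entrywise to $F\in C(Y,M_{n})$ shows $\mathrm{id}_{n}\otimes\mathcal{P}_{\nu}$ is isometric, and $\mathcal{P}_{\tau}$ is completely contractive, so the norm chase runs at every matrix level with no additional equivariance bookkeeping.
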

\begin{proof}
$(1)\Rightarrow (2)$. First we show that $Y$ is minimal. Let $L$ be the universal minimal $\Gamma$-space. Since $X$ is minimal, there is an extension $\psi : L\rightarrow X$, inducing  a $\Gamma$-equivariant $*$-monomorphism $\tilde{\psi} : C(X)\rightarrow C(L)$. The $\Gamma$-space $L$ is $\Gamma$-projective, and so there is a $\Gamma$-map $\pi : L\rightarrow Y$ with $\varphi\pi=\psi$. Since $\tilde{\pi}\tilde{\varphi}=\tilde{\psi}$ is a $*$-monomorphism and $C(Y)$ is $\Gamma$-essential, $\tilde{\pi}$ is a $*$-monomorphism. This means that $\pi$ is surjective. Thus $Y$ is a minimal $\Gamma$-space. Now $(2)$ follows, because every $\Gamma$-map from $C(Y)$ to $\ell^{\infty}(\Gamma)$ is a Poisson map, for some $\nu\in \mathrm{Prob}(Y)$.

$(2)\Rightarrow (1)$. Let $\mathcal{V}$ be a $\Gamma$-operator system and $\theta : C(Y)\rightarrow \mathcal{V}$ be a $\Gamma$-map such that the restriction of $\theta$ to $C(X)$ is isometric. Each $\tau$ in the state space of $\mathcal{V}$ induces a $\Gamma$-map $\mathcal{P}_{\tau} : \mathcal{V}\rightarrow \ell^{\infty}(\Gamma)$,  by $\mathcal{P}_{\tau}(v)(t)=\langle v, t\tau\rangle$. Choose a point mass $\delta_{x}\in \mathrm{Prob}(X)$ and extend it to a state $\tau$ on $\mathcal{V}$. Note that, since $C(X)$ is isometrically embedded into $\mathcal{V}$, sending the unit in $C(X)$  to that in $\mathcal{V}$, the positive extension is possible. But every $\Gamma$-map from $C(X)$ to $\ell^{\infty}(\Gamma)$ is a Poisson map, and a Poisson map is isometric if and only if the corresponding measure  is contractible. By the construction of $\tau$, $\mathcal{P}_{\tau}\theta|_{C(X)}$ is a Poisson map with measure $\delta_{x}$. Since $\delta_{x}$ is contractible, $\mathcal{P}_{\tau}\theta|_{C(X)}$ is an isometry. By $(2)$,  $\mathcal{P}_{\tau}\theta$ is also isometric. Therefore, for $f\in C(Y)$,
$$\|f\|=\|\mathcal{P}_{\tau}\theta(f)\|\leqslant \|\theta(f)\|\leqslant \|f\|,$$
which means that $\theta$ is isometric.

$(2)\Longleftrightarrow (3)$. It is straightforward to see that the restrictions of $\mathcal{P}_{\nu}$ to $C(X)$ is a Poisson map, with the push forward of $\nu$ as its measure. Now apply the result of Azencott.

$(3)\Rightarrow (4)$. Let $\varphi_{*}\nu$ be the push forward of $\nu$ under $\varphi$. Let us observe that $\varphi_{*}\nu =\delta_{x}$, for some $x\in X$, if and only if, $\textrm{supp}(\nu) \subseteq \varphi^{-1}(x)$. For any Borel set $E\subseteq\mathcal{B}_{X}$, if $x\in E$ then $\varphi^{-1}(x)\subseteq\varphi^{-1}(E)$. Thus $\textrm{supp}(\nu)\subseteq \varphi^{-1}(x)$ implies $\varphi_{*}\nu =\delta_{x}$. Conversely, suppose $\varphi_{*}\nu =\delta_{x}$, for some $x\in X$, and $y\notin\varphi^{-1}(x)$. Since $\varphi (y)\neq x$, there are open neighborhoods $V_{\varphi(y)}$ and $V_{x}$ such that $V_{\varphi(y)}\cap V_{x}=\emptyset$. Put $U_{y}=\varphi^{-1}(V_{\varphi(y)})$. Then $U_{y}$ is an open neighborhood of $y$ such that $\nu(U_{y})=\nu(\varphi^{-1}(V_{\varphi(y)}))=0$. Hence $y\notin \textrm{supp}(\nu)$.
Now if $\textrm{supp}(\nu)\subseteq \varphi^{-1}(x)$, for some $x$, $(4)$ follows because $\varphi_{*}\nu =\delta_{x}$ is contractible.

$(4)\Rightarrow (3)$. Suppose $\nu\in\mathrm{Prob}(Y)$ such that $\varphi_{*}\nu$ is contractible. Then $\delta_{x}\in\overline{\Gamma\varphi_{*}\nu}$, for some $x\in X$. Since $\varphi_{*}$ is isometric, this is equivalent to the existence of $\nu^{\prime}\in \mathrm{Prob}(Y)$ with $\varphi_{*}\nu^{\prime}=\delta_{x}$, and $\nu^{\prime}\in \overline{\Gamma\nu}$. This is to say that there exists  $\nu^{\prime}\in \mathrm{Prob}(Y)$ such that $\textrm{supp}(\nu^{\prime})\subseteq\varphi^{-1}(x)$ and $\nu^{\prime}\in \overline{\Gamma\nu}$. By $(3)$, $\delta_{y}\in \overline{\Gamma\nu^{\prime}}$, for some $y\in Y$. Since $\overline{\Gamma\nu^{\prime}}\subseteq\overline{\Gamma\nu}$, we get $\delta_{y}\in\overline{\Gamma\nu}$. This plus minimality of $Y$ finishes the proof.
\end{proof}

\begin{definition}\label{def1}
We say that $(Y, \varphi)$ is a \emph{$(\Gamma, X)$-boundary} (or simply a \emph{$X$-boundary}), if $(Y,\varphi)$ satisfies any of the above equivalent conditions.
\end{definition}

When $X$ is singleton with trivial action, the $X$-boundaries are exactly the topological boundaries in the sense of Furstenberg.

For a minimal $\Gamma$-space $X$, the commutative $C^{*}$-algebra $I_{\Gamma}(C(X))$ is the maximal $\Gamma$-essential extension of $C(X)$. By Definition \ref{def1}, the spectrum of $I_{\Gamma}(C(X))$ is a $(\Gamma, X)$-boundary. We denote this $\Gamma$-space, which is unique up to homeomorphism, by $\partial_{F}(\Gamma, X)$ and write $I_{\Gamma}(C(X))=C(\partial_{F}(\Gamma, X))$. Let us show that $\partial_{F}(\Gamma, X)$ is the universal $(\Gamma, X)$-boundary. Suppose $(Y, \varphi)$ is a $(\Gamma, X)$-boundary, inducing an extension $(C(Y), \tilde{\varphi})$ of $C(X)$, and let $\tilde{\pi}: C(X)\rightarrow C(\partial_{F}(\Gamma, X))$ be an extension of $C(X)$ with respect to it $(\partial_{F}(\Gamma, X), \pi)$ is a $(\Gamma, X)$-boundary. Since $C(\partial_{F}(\Gamma, X)$ is $\Gamma$-injective and $C(Y)$ is $\Gamma$-essential, there exists an injective $\Gamma$-map $\alpha : C(Y)\rightarrow C(\partial_{F}(\Gamma, X))$ such that $\alpha\tilde{\varphi}=\tilde{\pi}$. Note that this map is not necessarily $*$-homomorphism. Consider the adjoint maps $\tilde{\tilde{\pi}}: \mathcal{M}(\partial_{F}(\Gamma, X))\rightarrow \mathcal{M}(X)$, $\tilde{\tilde{\varphi}}: \mathcal{M}(Y)\rightarrow \mathcal{M}(X)$ and $\tilde{\alpha}: \mathcal{M}(\partial_{F}(\Gamma, X))\rightarrow \mathcal{M}(Y)$ between the spaces of regular Borel measures on these $\Gamma$-spaces, and note that $\tilde{\tilde{\varphi}}\tilde{\alpha}=\tilde{\tilde{\pi}}$ and $\tilde{\tilde{\pi}}|_{\partial_{F}(\Gamma, X)}=\pi$. Let $\nu=\tilde{\alpha}(\delta_{f})$ for $f\in \partial_{F}(\Gamma, X)$. Then $\tilde{\tilde{\varphi}}(\nu)=\delta_{x}$ for some $x\in X$, which means $\textrm{supp}(\nu)\subseteq\varphi^{-1}(x)$. Now $Y$ is a $(\Gamma, X)$-boundary, so $\{\delta_{y} : y\in Y\}\subseteq\tilde{\alpha}(\{\delta_{f} : f\in \partial_{F}(\Gamma, X)\})$. Since $\partial_{F}(\Gamma, X)$ is minimal, $\tilde{\alpha}(\{\delta_{f} : f\in \partial_{F}(\Gamma, X)\})=\{\delta_{y} : y\in Y\}$. It means there exists a surjective $\Gamma$-map from $\partial_{F}(\Gamma, X)$ onto $Y$. So $\partial_{F}(\Gamma, X)$ is the universal $(\Gamma, X)$-boundary. In particular, the universal strongly proximal extension of a minimal $\Gamma$-space $X$ always exists. We mention that if $X$ is singleton, $\partial_{F}(\Gamma, X)$ is nothing but the Furstenberg universal $\Gamma$-boundary $\partial_{F}\Gamma$.

Next we investigate the structure of $(\Gamma, X)$-boundaries when $X$ is a finite minimal  $\Gamma$-space.

	Let $\Gamma\curvearrowright X$ is an action of  $\Gamma$ on  $X$ and $\Lambda$ be another  discrete group. A \emph{cocycle} of the action in $\Lambda$ is a map $\alpha : \Gamma\times X\rightarrow \Lambda$ such that
	$$\alpha(\gamma_{1}\gamma_{2}, x)=\alpha(\gamma_{1}, \gamma_{2} x)\alpha(\gamma_{2}, x),$$
	for all $\gamma_{1},\gamma_{2}\in \Gamma$ and $x\in X$.

We need the notion of induced $\Gamma$-spaces \cite[4.2.21]{z84} (c.f., \cite[2.2.4]{cp12}).
Let $\Lambda$ be a finite index subgroup of a countable discrete group $\Gamma$,  $Y$ be a $\Lambda$-space, and $\tilde{Y}=\Gamma/\Lambda\times Y$. Take a transversal $T=\{t_{1},\ldots, t_{n}\}$ for $\Gamma/\Lambda$ such that $t_{1}=e$. Define the cocycle $\alpha : \Gamma\times \Gamma/\Lambda\rightarrow \Lambda$ by $\alpha(\gamma , t_{i}\Lambda)=\lambda$, such that $\gamma t_{i}\lambda\in T$, and observe that such a $\lambda$ is unique. Now $\Gamma$ acts on $\Gamma/\Lambda\times Y$ by
$$\gamma . (t_{i}\Lambda, y)=(\gamma t_{i}\alpha(\gamma, t_{i}\Lambda)\Lambda,\alpha(\gamma, t_{i}\Lambda)^{-1}y), \ \ \ \ (\gamma\in\Gamma, y\in Y).$$
The $\Gamma$-space $\tilde{Y}$ is called the \emph{induced}  $\Gamma$-space of the $\Lambda$-space $Y$.

\begin{theorem} \label{39}
Let $\Gamma$ be a countable discrete group, and let $\Lambda$ be a finite index subgroup of $\Gamma$. If $Y$ is a $\Lambda$-boundary, the induced $\Gamma$-space $\tilde{Y}$ is a $(\Gamma, \Gamma/\Lambda)$-boundary. Conversely, for a minimal finite $\Gamma$-space $X$, every $(\Gamma, X)$-boundary is the induced $\Gamma$-space of a $\Lambda$-boundary, for some finite index subgroup $\Lambda\leq\Gamma$. In particular, when $X$ is finite, the universal $(\Gamma, X)$-boundary $\partial_{F}(\Gamma, X)$ is the induced $\Gamma$-space of the Furstenberg boundary $\partial_{F}\Lambda$, for some subgroup $\Lambda\leq\Gamma$ of finite index.
\end{theorem}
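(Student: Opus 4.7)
The plan is to prove the three assertions in turn, using Theorem~\ref{a}(4) as the working characterization of a $(\Gamma, X)$-boundary (minimal strongly proximal extension) and then deducing the universal case from the first two parts plus the universal property of $\partial_{F}(\Gamma, X)$.

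For the first assertion, I would let $\varphi : \tilde{Y}=\Gamma/\Lambda\times Y\to \Gamma/\Lambda$ be the projection on the first factor and verify directly that $(\tilde Y,\varphi)$ is a minimal strongly proximal extension. Using the transversal $T=\{t_{1},\dots,t_{n}\}$ with $t_{1}=e$, a short computation with the cocycle $\alpha$ shows that $\alpha(\lambda,e\Lambda)=\lambda^{-1}$ and $\alpha(t_i,e\Lambda)=e$, so the $\Lambda$-action on the slice $\{e\Lambda\}\times Y$ is the original action on $Y$, and applying $t_i$ moves this slice onto $\{t_i\Lambda\}\times Y$. Minimality of $Y$ as a $\Lambda$-space then promotes to minimality of $\tilde Y$. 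For the fibrewise strong proximality, take $\nu\in\mathrm{Prob}(\tilde Y)$ with $\mathrm{supp}(\nu)\subseteq\varphi^{-1}(t_i\Lambda)=\{t_i\Lambda\}\times Y$. Pulling back by $t_i^{-1}$ reduces to the case $i=1$; then $\nu$ corresponds to a measure $\nu'\in\mathrm{Prob}(Y)$, and since $Y$ is a $\Lambda$-boundary there is $y\in Y$ with $\delta_{y}\in\overline{\Lambda\nu'}$. Transporting back gives $\delta_{(e\Lambda,y)}\in\overline{\Lambda\nu}\subseteq\overline{\Gamma\nu}$.

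For the converse, let $X$ be a minimal finite $\Gamma$-space, fix $x_{0}\in X$, set $\Lambda=\mathrm{Stab}_{\Gamma}(x_{0})$ (finite index by finiteness of $X$), and identify $X\cong \Gamma/\Lambda$. Choose a transversal $T$ as above. Given a $(\Gamma,X)$-boundary $(Y,\varphi)$, put $Y_{0}=\varphi^{-1}(x_{0})$, a closed $\Lambda$-invariant subset. The key structural observation is the disjoint decomposition $Y=\bigsqcup_{i} t_{i}Y_{0}$, which follows from $\varphi(t_{i}Y_{0})=t_{i}\Lambda$. I would first show that $Y_{0}$ is a $\Lambda$-boundary: for minimality, if $y\in Y_{0}$ then $\overline{\Gamma y}=\bigcup_{i}t_{i}\overline{\Lambda y}$, and the disjoint decomposition forces $\overline{\Lambda y}=Y_{0}$. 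For strong proximality, let $\nu\in\mathrm{Prob}(Y_{0})$. Viewed on $Y$, its push-forward is $\delta_{x_{0}}$, which is contractible in $X$ by minimality, so by Theorem~\ref{a} there is a net $(\gamma_{\alpha})\subseteq\Gamma$ with $\gamma_{\alpha}\nu\to\delta_{y}$ for some $y\in Y$. Because $X$ is finite (hence discrete), the push-forwards $\gamma_{\alpha}\delta_{x_{0}}$ are eventually equal to $\delta_{\varphi(y)}=\delta_{t_{i}\Lambda}$, so $\gamma_{\alpha}=t_{i}\lambda_{\alpha}$ with $\lambda_{\alpha}\in\Lambda$ eventually, and then $\lambda_{\alpha}\nu\to \delta_{t_{i}^{-1}y}\in\mathrm{Prob}(Y_{0})$. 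The final step is to write down the explicit map $\tilde{Y_{0}}\to Y$, $(t_{i}\Lambda,y)\mapsto t_{i}y$, and check that it is a continuous $\Gamma$-equivariant bijection between compact Hausdorff spaces, hence a $\Gamma$-homeomorphism; equivariance is a one-line verification using the definition of the cocycle $\alpha$.

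For the final assertion, apply the converse to $\partial_{F}(\Gamma,X)$ to obtain a finite index subgroup $\Lambda\leq\Gamma$ and a $\Lambda$-boundary $Y_{0}$ with $\partial_{F}(\Gamma,X)\cong \tilde{Y_{0}}$. To identify $Y_{0}$ with $\partial_{F}\Lambda$, I would argue universality: for any $\Lambda$-boundary $Z$, the first assertion gives that $\tilde Z$ is a $(\Gamma,X)$-boundary, so the universal property of $\partial_{F}(\Gamma,X)$ furnishes a surjective $\Gamma$-map $\tilde{Y_{0}}\to \tilde Z$; restricting to the fibres over $x_{0}$ yields a surjective $\Lambda$-map $Y_{0}\to Z$, which is exactly the universal property of $\partial_{F}\Lambda$.

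The main obstacle I anticipate is the strong proximality transfer in the converse direction, specifically extracting a $\Lambda$-subnet out of an arbitrary $\Gamma$-net producing $\delta_{y}\in\overline{\Gamma\nu}$. This is where the assumption that $X$ is finite is used in an essential way, via the discreteness argument that forces $\gamma_{\alpha}$ into a fixed coset of $\Lambda$ eventually; for infinite $X$ one cannot expect such a clean reduction, which explains why the structure theorem is restricted to the finite case.
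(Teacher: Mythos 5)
Your proposal is correct and follows essentially the same route as the paper: the same fibrewise decomposition $Y=\bigsqcup_i t_iY_0$ over the finite base, the same identification of the fibre $\varphi^{-1}(x_0)$ as a boundary for the stabilizer subgroup (using finiteness of $X$ to force the net $\gamma_\alpha$ into a single coset of $\Lambda$ eventually), the same cocycle computation for the induced action, and the same universality argument for the last assertion. If anything, your treatment of the strong-proximality transfer is slightly more careful than the paper's, since you account for the possibility that the limit point mass $\delta_y$ lands in a fibre other than $Y_0$ and translate back by $t_i^{-1}$.
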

\begin{proof}
With the above notations, we show that $\tilde{Y}$ is a $(\Gamma, \Gamma/\Lambda)$-boundary. To see that $\tilde{Y}$ is $\Gamma$-minimal, let $t\in T$ and $y,y^{\prime}\in Y$. Since $Y$ is $\Lambda$-minimal, there exists $\{\lambda_{\ell}\}\subseteq \Lambda$ such that $\lambda_{\ell}y\rightarrow y^{\prime}$. Fix $\lambda_{\ell}\in \Lambda$, and observe that $\alpha(t\lambda_{\ell}, \Lambda)=\lambda_{\ell}^{-1}$. Thus
$$t\lambda_{\ell}(\Lambda, y)=(t\lambda_{\ell}\alpha(t\lambda_{\ell}, \Lambda)\Lambda, \alpha(t\lambda_{\ell}, \Lambda)^{-1}y)=(t\Lambda, \lambda_{\ell}y).$$
When $\ell$ tends to infinity,  $t\lambda_{\ell}(\Lambda, y)\rightarrow(t\Lambda, y^{\prime})$. Therefore, $\overline{\Gamma(\Lambda, y)}=\tilde{Y}$. Similarly, $\lambda_{\ell}t^{-1}(t\Lambda, y)\rightarrow (\Lambda, y^{\prime})$, which implies that $(\Lambda , y^{\prime})\in \overline{\Gamma (t\Lambda,y)}$, for $t\in T$. Hence $\overline{\Gamma(\Lambda, y^{\prime})}\subseteq\overline{\Gamma(t\Lambda, y)}$. We have shown that
$$\tilde{Y}=\overline{\Gamma (\Lambda, y^{\prime})}\subseteq\overline{\Gamma (t\Lambda, y)}\subseteq \tilde{Y}.$$
Therefore, $\overline{\Gamma (t\Lambda, y)}=\tilde{Y}$, for $t\in T$ and $y\in Y$.

Next let us observe that $\tilde{Y}$ is a strongly proximal extension of $\Gamma/\Lambda$. Consider the continuous surjective map $\varphi : \tilde{Y}\rightarrow \Gamma/\Lambda$ given by $\varphi(t_{i}\Lambda, y)=t_{i}\Lambda$, for $y\in Y$. We show that $\varphi$ is $\Gamma$-equivariant. Put $\Lambda_{j}=t_{j}\Lambda t_{j}^{-1}$. Then $\Lambda_{j}$ is a subgroup of $\Gamma$, and $\{t_{1}t_{j}^{-1},\ldots ,t_{n}t_{j}^{-1}\}$ is a transversal for $\Gamma /\Lambda_{j}$. Therefore, $\Gamma=\bigsqcup_{i=1}^{n}t_{i}t_{j}^{-1}\Lambda_{j}$. If $\gamma=t_{i}t_{j}^{-1}\lambda_{j}\in t_{i}t_{j}^{-1}\Lambda_{j}$,  since $t_{j}^{-1}\lambda_{j}t_{j}\in\Lambda$, we have $\gamma\varphi(t_{j}\Lambda, y)=t_{i}\Lambda$. On the other hand, $\alpha(\gamma, t_{j}\Lambda)=e$, and so
\begin{align*}
\varphi(\gamma(t_{j}\Lambda, y)) &=\varphi(\gamma t_{j}\alpha(\gamma, t_{j}\Lambda)\Lambda, \alpha(\gamma, t_{j}\Lambda)^{-1}y)  \\
                                        &=\varphi(t_{i}\Lambda,y)=t_{i}\Lambda.
\end{align*}
Thus, $\gamma\varphi(t_{j}\Lambda, y)=\varphi(\gamma(t_{j}\Lambda, y))$. We have shown that $\varphi : \tilde{Y}\rightarrow \Gamma/\Lambda$ is an extension of $\Gamma/\Lambda$. To show that the extension is strongly proximal, let $\nu\in \mathrm{Prob}(\tilde{Y})$ such that $\textrm{supp}(\nu)\subseteq\varphi^{-1}(t_{i}\Lambda)=\{t_{i}\Lambda\times Y\}$, for some $t_{i}\in T$. Then $\nu =\delta_{t_{i}\Lambda}\times\mu$, for some $\mu\in \mathrm{Prob}(Y)$. Since $Y$ is $\Lambda$-strongly proximal, there exists $\{\lambda_{\ell}\}\subseteq\Lambda$ such that $\lambda_{\ell}\mu\rightarrow\delta_{y_{0}}$, for some $y_{0}\in Y$, in weak* topology. We claim that if $\{\gamma_{\ell}\}_{\ell}=\{t_{i}\lambda_{\ell}t_{i}^{-1}\}_{\ell}\subseteq\Gamma$, then $\gamma_{\ell}\nu\rightarrow\delta_{(t_{i}\Lambda, y_{0})}$, in weak* topology. Let $h\in C(\Gamma/\Lambda\times Y)$ and fix $\lambda_{\ell}\in \Lambda$. For $t_{i}\in T$,  $\alpha(t_{i}\lambda_{\ell}t_{i}^{-1}, t_{i}\Lambda)=\lambda_{\ell}^{-1}$,
\begin{align*}
&\int h((t_{i}\lambda_{\ell}t_{i}^{-1})(t\Lambda, y))d\nu(t\Lambda, y)   \\
    &= \int h((t_{i}\lambda_{\ell}t_{i}^{-1})(t\Lambda, y))d(\delta_{t_{i}\Lambda}\times\mu)(t\Lambda, y)     \\
    &= \int h((t_{i}\lambda_{\ell}t_{i}^{-1})(t_{i}\Lambda, y))d(\delta_{t_{i}\Lambda}\times\mu)(t_{i}\Lambda, y)   \\
    &= \int h(t_{i}\lambda_{\ell}\alpha(t_{i}\lambda_{\ell}t_{i}^{-1}, t_{i}\Lambda)\Lambda,\alpha(t_{i}\lambda_{\ell}t_{i}^{-1}, t_{i}\Lambda)^{-1}y)d(\delta_{t_{i}\Lambda}\times\mu)(t_{i}\Lambda, y)    \\
    &= \int h((t_{i}\Lambda, \lambda_{\ell}y))d(\delta_{t_{i}\Lambda}\times\mu)(t_{i}\Lambda, y).
\end{align*}
When $\lambda_{\ell}$ tends to infinity,
$$\int h((t_{i}\lambda_{\ell}t_{i}^{-1})(t\Lambda, y))d\nu(t\Lambda, y)\rightarrow h(t_{i}\Lambda, y_{0}).\ \ \ \ \ \ \ \ \ \ \ \ \ \ \ \ \ \ \ \ \ $$
Therefore, $\gamma_{\ell}\nu\rightarrow\delta_{(t_{i}\Lambda, y_{0})}$. This  means that $\tilde{Y}$ is a $\Gamma$-strongly proximal extension of $X$, and so is a $(\Gamma, \Gamma/\Lambda)$-boundary.

Note that $\tilde{Y}=\bigsqcup_{i=1}^{n}t_{i}\Lambda\times Y$. It is not hard to see that any $t_{i}\Lambda\times Y$ is a $\Lambda_{i}$-boundary when $\Lambda_{i}=t_{i}\Lambda t_{i}^{-1}$.
Conversely, Let $X=\{x_{1},\ldots, x_{n}\}$ be a minimal $\Gamma$-space. Let $\varphi : Y\rightarrow X$ be a surjective $\Gamma$-map making $(Y,\varphi)$ a $(\Gamma, X)$-boundary. We write $Y=\bigsqcup_{i=1}^{n}Y_{i}$, for $Y_{i}=\varphi^{-1}(x_{i})$, $1\leq i\leq n$. Fix $x_{i}\in X$, and consider the stabilizer subgroup $\Lambda_{i}=\{\gamma\in \Gamma : \gamma x_{i}=x_{i}\}$. We claim that $Y_{i}$ is a $\Lambda_{i}$-boundary. First let us show that $Y_{i}$ is $\Lambda_{i}$-minimal. Given $y, y^{\prime}\in Y_{i}$, since $Y$ is $\Gamma$-minimal, there exists $\{\gamma_{\ell}\}\subseteq\Gamma$ such that $\gamma_{\ell}y \rightarrow y^{\prime}$. Since $y, y^{\prime}\in Y_{i}=\varphi^{-1}(x_{i})$ and $\varphi$ is a $\Gamma$-map,  $\gamma_{\ell}x_{i}\rightarrow x_{i}$. Therefore, for sufficiently large  $\ell$, $\gamma_{\ell}\in \Lambda_{i}$. Hence $Y_{i}$ is $\Lambda_{i}$-minimal. To show that $Y_{i}$ is $\Lambda_{i}$-strongly proximal,  take $\nu\in \mathrm{Prob}(Y_{i})$, then since $(Y, \varphi)$ is a $(\Gamma, X)$-boundary and $\textrm{supp}(\nu)\subseteq Y_{i}=\varphi^{-1}(x_{i})$,  there exists $y\in \varphi^{-1}(x_{i})$ and $\{\gamma_{\ell}\}\subseteq \Gamma$ such that $\gamma_{\ell}\nu\rightarrow \delta_{y}$. Thus $\gamma_{\ell}\varphi_{*}\nu=\varphi_{*}(\gamma_{\ell}\nu)\rightarrow\delta_{x_{i}}$. Also, $\textrm{supp}(\nu)\subseteq \varphi^{-1}(x_{i})$, exactly when $\varphi_{*}\nu=\delta_{x_{i}}$. This implies that $\delta_{\gamma_{\ell}x_{i}}=\gamma_{\ell}\delta_{x_{i}}\rightarrow \delta_{x_{i}}$. Thus, for sufficiently large $\ell$, $\delta_{\gamma_{\ell}x_{i}}=\delta_{x_{i}}$. Therefore, for sufficiently large $\ell$, $\gamma_{\ell}\in \Lambda_{i}$.

Without loss of generality, we suppose $i=1$. Let $y\in Y_{1}=\varphi^{-1}(x_{1})$, and let $T=\{t_{1},\ldots, t_{n}\}$ be a transversal for $\Gamma/\Lambda_{1}$. For every $1\leq i\leq n$, $t_{i}x_{1}=x_{i}$, which implies that $t_{i}y\in Y_{i}$. Thus, $t_{i}Y_{1}\subseteq Y_{i}$. Similarly, $Y_{i}\subseteq t_{i}Y_{1}$. Therefore, $Y$ is a disjoint union of $n$-copies of $Y_{1}$. By considering the homeomorphism $\bigsqcup_{t_{i}\in T}t_{i}Y_{1}\longrightarrow \Gamma/\Lambda_{1}\times Y_{1}$, given by $t_{i}y\mapsto (t_{i}\Lambda_{1}, y)$, and inducing the action of $\bigsqcup_{t_{i}\in T}t_{i}Y_{1}$ to $\Gamma/\Lambda_{1}\times Y_{1}$, the $(\Gamma, X)$-boundary $Y$ is in the form of an induced $\Gamma$-space of  $\Lambda_{1}$-space $Y_{1}$. Note that $\Gamma/\Lambda_{1}\cong X$.

To prove the last part, since $\partial_{F}(\Gamma, X)$ is a $(\Gamma, X)$-boundary,  $\partial_{F}(\Gamma, X)$ is in the form of an induced $\Gamma$-space $X\times Y$, where $Y$ is a $\Lambda$-boundary, for some finite index subgroup $\Lambda$. On the other hand, the induced $\Gamma$-space $X\times\partial_{F}\Lambda$ is a $(\Gamma, X)$-boundary. There exists a surjective $\Lambda$-map $\theta: \partial_{F}\Lambda\rightarrow Y$ which induces a surjective $\Gamma$-map $\Theta : X\times\partial_{F}\Lambda\rightarrow X\times Y$, given by $\Theta(x, f)=(x, \theta(f))$. Since $\partial_{F}(\Gamma, X)\cong X\times Y$ is universal,  $X\times Y\cong X\times\partial_{F}\Lambda$.
\end{proof}

By the above theorem,  if $\Gamma$ is amenable and $(Y, \varphi)$ is a $(\Gamma, X)$-boundary for minimal finite $\Gamma$-space $X=\{x_{1},\ldots , x_{n}\}$, then $Y$ has exactly $n$ elements. This is because $Y=\bigsqcup_{i=1}^{n}\varphi^{-1}(x_{i})$, where for every $i$, $\varphi^{-1}(x_{i})$ is a $\Lambda_{i}$-boundary for $\Lambda_{i}=\{\gamma\in\Gamma: \gamma x_{i}=x_{i}\}$. Now every $\Lambda_{i}$ is amenable, which implies that every $\varphi^{-1}(x_{i})$ is singleton.

\section{On a problem of Hadwin and Paulsen}
There is a contravariant functor between the category of compact Hausdorff spaces with continuous maps and the category of unital commutative $C^{*}$-algebras with $*$-homomorphisms, sending  projective objects to  injective objects. Hadwin and Paulsen \cite{hp11} showed that for every compact Hausdorff space $X$, there is a unique projective cover $P$, which is minimal among all projective covers of $X$. As a result, the injective envelope $I(C(X))$ of $C(X)$ is $*$-isomorphic to $C(P)$. They also extended this to the case when a countable discrete group $\Gamma$ acts on a compact space $X$, using the functor between compact $\Gamma$-spaces and  unital commutative $\Gamma$-$C^{*}$-algebras. In this case,  unlike the previous case where rigidity and essentiality of projective covers are equivalent \cite[Proposition 2.11]{hp11}, there is no $\Gamma$-projective, $\Gamma$-rigid cover, even when $X$ is a singleton \cite[Proposition 3.1]{hp11}. However, one still could work with  $\Gamma$-essential covers.

Hadwin and Paulsen showed that if $\Gamma$ is a countable discrete group and $X$ is a minimal $\Gamma$-space, a minimal left ideal of the Stone-\v{C}ech compactification of $\Gamma$, is the minimal $\Gamma$-projective cover of $X$. This leads naturally to the question that for a minimal left ideal $L$ in $\beta \Gamma$, is $C(L)$ $*$-isomorphic to $I_{\Gamma}(C(X))$? Recall that any two minimal left ideals in $\beta\Gamma$ are homeomorphic, and  the minimal left ideal in $\beta\Gamma$ is nothing but the universal minimal $\Gamma$-space.

Since $L$ is $\Gamma$-projective, $C(L)$ is $\Gamma$-injective in the category of $\Gamma$-$C^{*}$-algebras, and so $I_{\Gamma}(C(L))=C(L)$. However, as we see soon, the problem of Hadwin and Paulsen has negative answer in general. For this let us first observe that for an arbitrary countable infinite group $\Gamma$, there is an infinite compact minimal $\Gamma$-space which has an invariant measure.

A probability-measure-preserving (p.m.p.) action of a group $\Gamma$ on a probability measure space $(X, \mu )$ is a homomorphism of $\Gamma$ into the group of measure-preserving transformations on $X$, parameterized by $\Gamma$. In this context, the action $\Gamma$ on $(X, \mu )$ is said to be \emph{free} if there is a $\Gamma$-invariant set $X_{0}\subseteq X$ with $\mu (X_{0})=1$, such that if $sx=x$, for some $x\in X_{0}$ and $s\in \Gamma$, then $s=e$.

Let $\Gamma$ be a countabe infinite group. The Bernoulli shift action of $\Gamma$ on the space $\{ 0, 1 \} ^{\Gamma}$, with any invariant probability measure (for example, the product of equiprobability measure on $\{ 0, 1 \}$) is a free p.m.p. action. Thus, every countable infinite group admits at least one non-trivial infinite free p.m.p. action.  Benjamin Weiss in \cite[6.1]{w12} has shown that if $\Gamma$ is any countable infinite group and $\Gamma\curvearrowright (X, \mu)$ is any free p.m.p. action, there is a minimal continuous action as a subshift of $([0,1]\times \mathbb{N})^{\Gamma}$, which admits an invariant measure and is a model for $(X, \mu)$ (that is, an isomorphic copy of the action which is also continuous). In particular, there exists a non-trivial minimal $\Gamma$-space with an invariant measure. We note that this $\Gamma$-space is infinite.

\begin{lemma}\label{thel}
Suppose that $L$ is the universal minimal $\Gamma$-space, $X$ is a minimal $\Gamma$-space, and $Z$ is an infinite minimal $\Gamma$-space with an invariant measure $\mu$. Let $\varphi : L\rightarrow X$ and $\theta : L\rightarrow Z$ be surjective $\Gamma$-maps for which the set of all pull backs of $\mu$ under $\theta$ contains a measure $\nu$ such that $\nu(\varphi^{-1}(x))\neq 0$, for some $x\in X$, then $I_{\Gamma}(C(X))\ncong C(L)$.
\end{lemma}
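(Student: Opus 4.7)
The plan is to argue by contradiction: assume $I_{\Gamma}(C(X))\cong C(L)$. Then $C(L)$ is a $\Gamma$-essential extension of $C(X)$ via $\tilde{\varphi}$, so Theorem~\ref{a} identifies $(L,\varphi)$ as a $(\Gamma,X)$-boundary. I will exploit condition $(3)$ of that theorem: any $\nu'\in\mathrm{Prob}(L)$ whose pushforward $\varphi_{*}\nu'$ is contractible is itself contractible.

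The right test measure is not $\nu$ itself but its normalized restriction to a single fibre. Put $c:=\nu(\varphi^{-1}(x))>0$ and $\nu_{1}:=\frac{1}{c}\,\nu|_{\varphi^{-1}(x)}\in\mathrm{Prob}(L)$. Since $\textrm{supp}(\nu_{1})\subseteq\varphi^{-1}(x)$, the observation in the proof of Theorem~\ref{a} gives $\varphi_{*}\nu_{1}=\delta_{x}$, which is trivially contractible. By condition $(3)$ there exist $\{\gamma_{\ell}\}\subseteq\Gamma$ and $y\in L$ with $\gamma_{\ell}\nu_{1}\to\delta_{y}$ in the weak-$*$ topology; pushing forward by $\theta$ yields $\gamma_{\ell}\theta_{*}\nu_{1}\to\delta_{\theta(y)}$ in $\mathrm{Prob}(Z)$.

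The contradiction will come from invariance of $\mu$. From $\nu\geq c\nu_{1}$ one gets the measure inequality $\mu=\theta_{*}\nu\geq c\,\theta_{*}\nu_{1}$, and $\Gamma$-invariance of $\mu$ upgrades this to $\mu=\gamma_{\ell}\mu\geq c\,\gamma_{\ell}\theta_{*}\nu_{1}$ for every $\ell$. Testing against a nonnegative $f\in C(Z)$ with $f(\theta(y))=1$ supported in an open neighborhood $U$ of $\theta(y)$ and passing to the limit gives $\mu(U)\geq\int f\,d\mu\geq c$; outer regularity then forces $\mu(\{\theta(y)\})\geq c>0$. But $Z$ is infinite and minimal, so the orbit $\Gamma\theta(y)$ is infinite, and the $\Gamma$-invariance of $\mu$ on its pairwise-disjoint translates contradicts $\mu(Z)=1$.

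The main conceptual step is recognizing that $\nu_{1}$ rather than $\nu$ is the correct witness for condition $(3)$; once that is in place, the strong proximality of $(L,\varphi)$ over $X$ clashes with atomlessness of any invariant probability on an infinite minimal space, and the contradiction is essentially forced.
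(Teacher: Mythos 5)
Your proof is correct and follows essentially the same route as the paper: normalize $\nu$ on the fibre $\varphi^{-1}(x)$, contract the resulting measure to a point mass using the $(\Gamma,X)$-boundary property of $(L,\varphi)$, push forward to $Z$, and produce an atom of $\mu$, which is incompatible with $Z$ being infinite and minimal. One point where you actually improve on the paper: the paper's proof passes from $t_\alpha\mu_x\ll\mu$ to $\lim_\alpha t_\alpha\mu_x\ll\mu$, and absolute continuity is not in general preserved under weak* limits; your use of the domination $\mu\ge c\,\gamma_\ell\theta_*\nu_1$, which is preserved by invariance of $\mu$ and survives the weak* limit when tested against nonnegative continuous functions together with outer regularity, is the right way to make that step rigorous.
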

\begin{proof}
Let $(C(L), \tilde{\varphi})$ be a $\Gamma$-essential extension of $C(X)$ induced by a surjective $\Gamma$-map $\varphi : L\rightarrow X$ and let $\theta : L\rightarrow Z$ be a surjective $\Gamma$-map. Fix $\varphi$ and $\theta$, and suppose $\nu$ is a pull back of $\mu$ under $\theta$ such that $\nu(\varphi^{-1}(x))\neq 0$. Define $\nu_{x}\in \mathrm{Prob}(L)$ by $\nu_{x}(E)=\frac{\nu(E\cap\varphi^{-1}(x))}{\nu(\varphi^{-1}(x))}$. It is easy to see that $\textrm{supp}(\nu_{x})\subseteq\varphi^{-1}(x)$ and $\nu_{x}\ll\nu$. Since $L$ is a $(\Gamma, X)$-boundary, there exists $\{t_{\alpha}\}\subseteq\Gamma$ and $\ell\in L$ such that $t_{\alpha}\nu_{x}\rightarrow\delta_{\ell}$. Let $\theta_{*}\nu_{x}=\mu_{x}$, when $\theta_{*}\nu_{x}$ denotes the push forward of $\nu_{x}$ under $\theta$, then $t_{\alpha}\mu_{x}\rightarrow\delta_{\theta(\ell)}$. Moreover,  $\mu_{x}=\theta_{*}\nu_{x}\ll\theta_{*}\nu=\mu$. Thus $\mu$ is invariant, and $\delta_{\theta(\ell)}=\lim_{\alpha}t_{\alpha}\mu_{x}\ll \lim_{\alpha}t_{\alpha}\mu=\mu$. Therefore, $\mu(\{\theta(\ell)\})>0$.

Consider the orbit $\Gamma\theta(\ell)$ of $\theta(\ell)$. For $x\in \Gamma\theta(\ell)$, $\mu(\{x\})=\mu(\{\theta(\ell)\})$. In particular, $\Gamma\theta(\ell)$ must be finite, since otherwise, $\mu(Z)>\mu(\Gamma\theta(\ell))>1$. On the other hand, since $Z$ is minimal,  $\Gamma\theta(\ell)=\overline{\Gamma\theta(\ell)}=Z$. This implies that $Z$ is finite, which is a contradiction. Thus $C(L)$ could not be a $\Gamma$-essential extension of $C(X)$. So $I_{\Gamma}(C(X))\ncong C(L)$.
\end{proof}

\begin{theorem}\label{c}
If $L$ is the universal minimal $\Gamma$-sapce and $Y$ is an $X$-boundary for a minimal finite $\Gamma$-space $X$, then $I_{\Gamma}(C(Y))\ncong C(L)$.
\end{theorem}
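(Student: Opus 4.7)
The plan is to reduce the general claim to the already-treated case where the extension is simply $X$ itself, by exploiting the composability of $\Gamma$-essential extensions, and then to apply Lemma \ref{thel}, whose measure-theoretic hypothesis becomes automatic once $X$ is finite.

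Suppose, toward a contradiction, that $I_{\Gamma}(C(Y))\cong C(L)$. Because $(Y,\varphi)$ is an $X$-boundary, Theorem \ref{a}(1) says that $\tilde{\varphi}:C(X)\to C(Y)$ is a $\Gamma$-essential extension. The assumed isomorphism is dual to a surjective $\Gamma$-map $\pi:L\to Y$, and the embedding $\tilde{\pi}:C(Y)\to C(L)=I_{\Gamma}(C(Y))$ is $\Gamma$-essential by maximality of the $\Gamma$-injective envelope. Unwinding the definition shows that a composition of $\Gamma$-essential extensions is $\Gamma$-essential: if $\psi:C(L)\to\mathcal U$ is a $\Gamma$-map with $\psi\tilde{\pi}\tilde{\varphi}$ completely isometric, then essentiality of $\tilde{\varphi}$ forces $\psi\tilde{\pi}$ to be completely isometric, and then essentiality of $\tilde{\pi}$ forces $\psi$ to be completely isometric. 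Thus $\tilde{\pi}\tilde{\varphi}:C(X)\to C(L)$ is $\Gamma$-essential. Since $L$ is $\Gamma$-projective, $C(L)$ is $\Gamma$-injective, and any $\Gamma$-injective $\Gamma$-essential extension coincides with the $\Gamma$-injective envelope. Therefore $I_{\Gamma}(C(X))\cong C(L)$.

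It now suffices to contradict this last isomorphism via Lemma \ref{thel}, applied to the finite minimal $\Gamma$-space $X$. By the Weiss construction recalled immediately before the lemma, there exists an infinite minimal $\Gamma$-space $Z$ carrying a $\Gamma$-invariant probability measure $\mu$. Universality of $L$ supplies surjective $\Gamma$-maps $\varphi:L\to X$ and $\theta:L\to Z$. The push-forward $\theta_{*}:\mathrm{Prob}(L)\to\mathrm{Prob}(Z)$ is surjective (any continuous surjection of compact spaces induces such a surjection on probability measures), so one may choose $\nu\in\mathrm{Prob}(L)$ with $\theta_{*}\nu=\mu$. Writing $X=\{x_{1},\ldots,x_{n}\}$, the fibres $\varphi^{-1}(x_{i})$ partition $L$, so
\[
\sum_{i=1}^{n}\nu(\varphi^{-1}(x_{i}))=\nu(L)=1,
\]
and in particular $\nu(\varphi^{-1}(x_{j}))>0$ for some $j$. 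The hypothesis of Lemma \ref{thel} is satisfied, yielding $I_{\Gamma}(C(X))\ncong C(L)$, which contradicts the conclusion of the previous paragraph.

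The only step with any substance is the reduction paragraph: one has to verify that $\Gamma$-essential extensions compose and that a $\Gamma$-injective $\Gamma$-essential extension must be the injective envelope. Both facts are routine consequences of Hamana's construction recalled in Section~2, so no genuine obstacle is expected.
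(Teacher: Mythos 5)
Your proposal is correct and follows essentially the same route as the paper: both arguments reduce the claim to $I_{\Gamma}(C(X))\ncong C(L)$ by exploiting that $C(Y)$ is a $\Gamma$-essential extension of $C(X)$ (the paper does this by identifying $I_{\Gamma}(C(Y))$ with $C(\partial_{F}(\Gamma,X))=I_{\Gamma}(C(X))$, you by composing essential extensions under the contradiction hypothesis), and both then apply Lemma \ref{thel}, observing that finiteness of $X$ forces some fibre $\varphi^{-1}(x)$ to have positive $\nu$-measure. The routine facts you defer (composability of $\Gamma$-essential extensions, and that a $\Gamma$-injective $\Gamma$-essential extension is the $\Gamma$-injective envelope) are indeed standard Hamana theory and pose no obstacle.
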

\begin{proof}
Let us first observe that $I_{\Gamma}(C(X))\ncong C(L)$. Consider a surjective $\Gamma$-map $\varphi : L\rightarrow X$. In the notations of above lemma, let $\theta : L\rightarrow Z$ be a surjective $\Gamma$-map and $\nu$ be any member of the set of all pull backs of $\mu$ under $\theta$. Let $X=\{x_{1}, \ldots\, x_{n}\}$. Since $L=\bigcup_{i=1}^{n}\varphi^{-1}(x_{i})$, $\nu(L)=\sum_{i=1}^{n}\nu(\varphi^{-1}(x_{i}))$. We have $\nu(L)\neq 0$, hence there exists  $x\in X$ such that $\nu(\varphi^{-1}(x))\neq 0$. So by above lemma, $I_{\Gamma}(C(X))\ncong C(L)$.

On the other hand, $I_{\Gamma}(C(X))=C(\partial_{F}(\Gamma, X))$, for the universal $X$-boundary $\partial_{F}(\Gamma, X)$. Thus $C(\partial_{F}(\Gamma, X))$ is $\Gamma$-injective, which implies $I_{\Gamma}(C(\partial_{F}(\Gamma, X)))=C(\partial_{F}(\Gamma, X))$, that is, $I_{\Gamma}(C(\partial_{F}(\Gamma, X)))\neq C(L)$.

Now if $Y$ is an $X$-boundary, we have $C(X)\hookrightarrow C(Y)\hookrightarrow C(\partial_{F}(\Gamma, X))$, with $I_{\Gamma}(C(X))=I_{\Gamma}(C(\partial_{F}(\Gamma, X)))$. Thus $I_{\Gamma}(C(Y))=C(\partial_{F}(\Gamma, X))$, which means $I_{\Gamma}(C(Y))\ncong C(L)$.
\end{proof}

\section{Applications to reduced crossed products}
In this section we apply our results to find conditions for exactness of the reduced crossed product of the (minimal) action of a countable group on a compact space. For the general theory of discrete exact groups and amenable actions, we refer the reader to \cite{bo08}.

Recall that a group $\Gamma$ is exact if $C^{*}_{r}(\Gamma)$ is exact as a $C^{*}$-algebra. This is introduced by Kirchberg and Wasserman in \cite{kw99} and is known to be equivalent to the amenability of $\Gamma$ actions on arbitrary compact spaces. Ozawa observed that one needs only the amenability of  canonical action on  $\beta\Gamma$ \cite{o00}. Exactness of $\Gamma$ is also known to be equivalent to the amenability of the $\Gamma$-action on the Furstenberg boundary $\partial_{F}\Gamma$ \cite[4.5]{kk14}. In the latter case, the key point is that $C(\partial_{F}\Gamma)$ is $\Gamma$-injective. In this section we show that the same  idea could be adapted to show  that $\Gamma$ is exact if and only if the $\Gamma$-action on $\partial_{F}(\Gamma, X)$ is amenable, for every  minimal $\Gamma$-space $X$.

\begin{lemma}\label{lem8}
Suppose $\Gamma$ is exact and $X$ is a $\Gamma$-space. If $C(X)$ is $\Gamma$-injective, then the action $\Gamma\curvearrowright X$ is amenable.
\end{lemma}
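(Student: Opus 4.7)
My plan is to follow the strategy used by Kalantar and Kennedy \cite[Theorem 4.5]{kk14} for the special case $X = \partial_{F}\Gamma$, adapting it to the setting where $C(X)$ is an arbitrary $\Gamma$-injective commutative algebra. The two key ingredients are Ozawa's theorem \cite{o00}, which states that exactness of $\Gamma$ is equivalent to topological amenability of the canonical action $\Gamma \curvearrowright \beta\Gamma$, and the $\Gamma$-injectivity hypothesis, which I will use to pull an approximate invariant mean from $\beta\Gamma$ back to $X$.

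First, using Ozawa's theorem, I would fix a net of continuous maps $m_{i} : \beta\Gamma \rightarrow \mathrm{Prob}(\Gamma)$ witnessing amenability, so that $\sup_{y \in \beta\Gamma} \|s\cdot m_{i}(y) - m_{i}(sy)\|_{1} \rightarrow 0$ for every $s \in \Gamma$. Next, apply $\Gamma$-injectivity of $C(X)$ to the unital $\Gamma$-map $\mathbb{C} \rightarrow C(X)$ together with the extension $\mathbb{C} \hookrightarrow C(\beta\Gamma)$ in order to obtain a $\Gamma$-equivariant ucp map $\phi : C(\beta\Gamma) \rightarrow C(X)$. Its adjoint restricted to point masses yields a weak*-continuous $\Gamma$-equivariant map $\psi : X \rightarrow \mathrm{Prob}(\beta\Gamma)$, given by $\psi(x)(f) = \phi(f)(x)$.

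Finally, I would combine these two data by averaging: define $\mu_{i} : X \rightarrow \mathrm{Prob}(\Gamma)$ by
\[
\mu_{i}(x)(s) := \int_{\beta\Gamma} m_{i}(y)(s)\, d\psi(x)(y), \qquad s \in \Gamma,\; x \in X.
\]
Each $\mu_{i}(x)$ is a probability measure on $\Gamma$, and a short estimate using $\psi(sx) = s\psi(x)$ yields
\[
\|s\cdot\mu_{i}(x) - \mu_{i}(sx)\|_{1} \leq \int_{\beta\Gamma} \|s\cdot m_{i}(y) - m_{i}(sy)\|_{1}\, d\psi(x)(y) \leq \sup_{y}\|s\cdot m_{i}(y) - m_{i}(sy)\|_{1},
\]
which tends to $0$ uniformly in $x$. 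This exhibits $\Gamma \curvearrowright X$ as topologically amenable.

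The step I expect to require the most care is verifying that each $\mu_{i}$ is genuinely $\ell^{1}$-norm continuous as a map into $\mathrm{Prob}(\Gamma)$, rather than just weak*-continuous. Pointwise in the coordinate $s$, continuity of $\mu_{i}$ follows immediately from weak*-continuity of $\psi$ together with continuity of $y \mapsto m_{i}(y)(s)$. The upgrade to $\ell^{1}$-norm continuity then relies on the elementary but essential fact that for countable discrete $\Gamma$, pointwise convergence of nets inside $\mathrm{Prob}(\Gamma)$ coincides with $\ell^{1}$-norm convergence (by a Scheff\'e-type argument). Once this topological point is handled, everything else is routine bookkeeping and Fubini.
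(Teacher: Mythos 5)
Your proof is correct and follows essentially the same route as the paper: both arguments use $\Gamma$-injectivity of $C(X)$ to produce a $\Gamma$-equivariant map from $X$ into $\mathrm{Prob}(\beta\Gamma)$ and then transfer amenability of $\Gamma\curvearrowright\beta\Gamma$ (Ozawa) back to $X$. The only difference is that you carry out the final transfer step explicitly by averaging the approximate invariant means $m_{i}$ against $\psi(x)$, whereas the paper delegates it to the cited fact that amenability passes to $\mathrm{Prob}(\beta\Gamma)$ and pulls back along equivariant maps.
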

\begin{proof}
Since $C(X)$ is $\Gamma$-injective, there is a $\Gamma$-map $\psi : \ell^{\infty}(\Gamma)\rightarrow C(X)$. Identifying $\ell^{\infty}(\Gamma)$ with $C(\beta\Gamma)$, restriction of the adjoint  map $\tilde{\psi}: \mathcal{M}(X)\rightarrow\mathcal{M}(\beta\Gamma)$ to the space of point masses on $X$ gives a continuous $\Gamma$-equivariant map $\theta: X\rightarrow \mathrm{Prob}(\beta\Gamma)$. Since $\Gamma$ is exact, the $\Gamma$-action on $\beta\Gamma$ is amenable and so is the $\Gamma$-action on $\mathrm{Prob}(\beta\Gamma)$ \cite[Proposition 9]{cm14} (see also, \cite[3.6]{h00}). Thanks to the existence of the $\Gamma$-equivariant map $\theta$, the $\Gamma$-action on $X$ is also amenable.
\end{proof}

Next we show that if $X$ is a minimal $\Gamma$-space, the exactness passes from $\Gamma$ to the reduced crossed product $C(X)\rtimes_{r}\Gamma$. Recall that a $C^{*}$-algebra is {\it exact} if and only if it can be embedded into a nuclear $C^{*}$-algebra \cite{k95}, \cite{w90}.

\begin{theorem}\label{b}
Let $\Gamma$ be a countable discrete group. The following  are equivalent:
\begin{enumerate}
	\item[(1)] $\Gamma$ is exact,
	\item[(2)] For every minimal $\Gamma$-space $X$, the $\Gamma$-action on $\partial_{F}(\Gamma, X)$ is amenable,
	\item[(3)] For every minimal $\Gamma$-space $X$, $C(\partial_{F}(\Gamma, X))\rtimes_{r}\Gamma$ is nuclear,
	\item[(4)] For every minimal $\Gamma$-space $X$, $C(X)\rtimes_{r}\Gamma$ is exact.
\end{enumerate}
\end{theorem}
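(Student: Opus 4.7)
The natural strategy is to prove the cycle $(1) \Rightarrow (2) \Rightarrow (3) \Rightarrow (4) \Rightarrow (1)$, since three of the four implications reduce cleanly to results already developed here or to standard facts about amenable actions.

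For $(1) \Rightarrow (2)$, the key observation is that $C(\partial_F(\Gamma, X)) = I_\Gamma(C(X))$ is by construction $\Gamma$-injective as a $\Gamma$-$C^*$-algebra. Assuming $\Gamma$ is exact, I would apply Lemma \ref{lem8} with the $\Gamma$-space $\partial_F(\Gamma, X)$ in place of $X$: this immediately yields amenability of the $\Gamma$-action on $\partial_F(\Gamma, X)$. For $(2) \Rightarrow (3)$, I would invoke the classical theorem of Anantharaman-Delaroche: for any amenable action of a discrete group $\Gamma$ on a nuclear unital $C^*$-algebra, the full and reduced crossed products coincide and are nuclear. Since $C(\partial_F(\Gamma, X))$ is commutative, hence nuclear, this yields nuclearity of $C(\partial_F(\Gamma, X)) \rtimes_r \Gamma$.

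For $(3) \Rightarrow (4)$, the canonical surjective $\Gamma$-map $\partial_F(\Gamma, X) \to X$ induces a $\Gamma$-equivariant $*$-monomorphism $\tilde{\pi} : C(X) \hookrightarrow C(\partial_F(\Gamma, X))$. The standard argument that an equivariant embedding of $\Gamma$-$C^*$-algebras descends to an injective $*$-homomorphism on reduced crossed products (obtained by taking a faithful $*$-representation of the larger algebra and comparing both regular representations on the common Hilbert space $H \otimes \ell^2(\Gamma)$) then produces
$$C(X) \rtimes_r \Gamma \hookrightarrow C(\partial_F(\Gamma, X)) \rtimes_r \Gamma.$$
Exactness of the target (being nuclear) transfers to $C(X) \rtimes_r \Gamma$, since exactness is inherited by $C^*$-subalgebras. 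Finally, for $(4) \Rightarrow (1)$, take $X$ to be a singleton with trivial $\Gamma$-action; minimality is automatic, $C(X) \rtimes_r \Gamma \cong C^*_r(\Gamma)$, and exactness of $\Gamma$ follows directly.

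The real substance lies in $(1) \Rightarrow (2)$, which crucially exploits the $\Gamma$-injectivity of $I_\Gamma(C(X))$ together with Lemma \ref{lem8}; the remaining implications are essentially formal. I do not anticipate a major obstacle, as the theorem packages the Kalantar--Kennedy circle of ideas for $\partial_F \Gamma$ (recovered in the case $X = \{\mathrm{pt}\}$) into the relative framework afforded by the $(\Gamma, X)$-boundary.
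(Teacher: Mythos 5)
Your proposal is correct and follows essentially the same route as the paper: the cycle $(1)\Rightarrow(2)\Rightarrow(3)\Rightarrow(4)\Rightarrow(1)$, with $(1)\Rightarrow(2)$ obtained by applying Lemma \ref{lem8} to the $\Gamma$-injective algebra $C(\partial_F(\Gamma,X))=I_\Gamma(C(X))$, $(2)\Rightarrow(3)$ by the standard amenable-action nuclearity theorem, $(3)\Rightarrow(4)$ by embedding $C(X)\rtimes_r\Gamma$ into the nuclear crossed product of the boundary, and $(4)\Rightarrow(1)$ by taking $X$ a singleton. The only cosmetic difference is that you justify $(3)\Rightarrow(4)$ via heredity of exactness for subalgebras while the paper cites the Kirchberg--Wassermann characterization of exactness as embeddability into a nuclear $C^*$-algebra; these are interchangeable.
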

\begin{proof}
$(1)\Rightarrow (2)$. Since $C(\partial_{F}(\Gamma, X))=I_{\Gamma}(C(X))$, $C(\partial_{F}(\Gamma, X))$ is $\Gamma$-injective. Now the result follows from Lemma \ref{lem8}.

$(2)\Rightarrow (3)$. This is well known (see, \cite[4.3.4, 4.3.7]{bo08}).

$(3)\Rightarrow (4)$. We have,
$$C(X)\rtimes_{r}\Gamma\subseteq I_{\Gamma}(C(X))\rtimes_{r}\Gamma = C(\partial_{F}(\Gamma, X))\rtimes_{r}\Gamma,$$
and by assumption, $C(\partial_{F}(\Gamma, X))\rtimes_{r}\Gamma$ is nuclear. Thus $C(X)\rtimes_{r}\Gamma$ is exact, as it is embedded into a nuclear $C^{*}$-algebra \cite[Proposition 7]{w90}.

$(4)\Rightarrow (1)$. Just let $X$ be a singleton.
\end{proof}

 Finding a tangible nuclear $C^{*}$-algebra containing a given exact $C^{*}$-algebra is the next natural thing to ask for. Ozawa has conjectured that for an exact $C^{*}$-algebra $A$, there is a nuclear $C^{*}$-algebra $\mathcal{N}(A)$ such that $A\subset\mathcal{N}(A)\subset I(A)$, where $I(A)$ is the injective envelope of $A$. Kalantar and Kennedy proved that if $\Gamma$ is a discrete exact group, for the reduced group $C^{*}$-algebra $C^{*}_{r}(\Gamma)$, there is a canonical unital nuclear $C^{*}$-algebra $\mathcal{N}(C^{*}_{r}(\Gamma))$ such that $C^{*}_{r}(\Gamma)\subset\mathcal{N}(C^{*}_{r}(\Gamma))\subset I(C^{*}_{r}(\Gamma))$ \cite[4.6]{kk14}. Indeed, they observed that $C(\partial_{F}\Gamma)\rtimes_{r}\Gamma$, which is  nuclear  when $\Gamma$ is exact, could play the role of $\mathcal{N}(C^{*}_{r}(\Gamma))$. The above result shows that the same could be done in the more general case of minimal $\Gamma$-spaces.

\begin{corollary}
Let $\Gamma$ be an exact group and let $X$ be a minimal $\Gamma$-space. There is a canonical unital nuclear $C^{*}$-algebra $\mathcal{N}(C(X)\rtimes_{r}\Gamma)$ such that
$$C(X)\rtimes_{r}\Gamma\subset\mathcal{N}(C(X)\rtimes_{r}\Gamma)\subset I(C(X)\rtimes_{r}\Gamma),$$
where $I(C(X)\rtimes_{r}\Gamma)$ is the injective envelope of $C(X)\rtimes_{r}\Gamma$.
\end{corollary}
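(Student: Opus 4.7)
The plan is to take $\mathcal{N}(C(X)\rtimes_{r}\Gamma):=C(\partial_{F}(\Gamma,X))\rtimes_{r}\Gamma$ and verify the three required properties: nuclearity and the two inclusions. Nuclearity is immediate from the implication $(1)\Rightarrow(3)$ of Theorem \ref{b}, which gives precisely that $C(\partial_{F}(\Gamma,X))\rtimes_{r}\Gamma$ is nuclear whenever $\Gamma$ is exact. The left inclusion $C(X)\rtimes_{r}\Gamma\subseteq C(\partial_{F}(\Gamma,X))\rtimes_{r}\Gamma$ follows by applying the reduced crossed product functor to the $\Gamma$-equivariant $*$-monomorphism $C(X)\hookrightarrow C(\partial_{F}(\Gamma,X))$: since $\Gamma$ is exact, this functor preserves injections.

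The nontrivial inclusion is $C(\partial_{F}(\Gamma,X))\rtimes_{r}\Gamma\subseteq I(C(X)\rtimes_{r}\Gamma)$, and I would follow the strategy of \cite[Theorem 4.6]{kk14} with $C(X)$ in place of $\mathbb{C}$. Set $A:=C(X)\rtimes_{r}\Gamma$ and $I:=I(A)$. The canonical unitaries $\lambda_{s}\in A\subseteq I$ implement the original $\Gamma$-action on $C(X)\subseteq A$ by conjugation, and the formula $\mathrm{Ad}(\lambda_{s})$ extends this to a $\Gamma$-action on the whole of $I$, making $I$ a $\Gamma$-$C^{*}$-algebra into which $C(X)\hookrightarrow I$ is $\Gamma$-equivariant. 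Adapting the Kalantar-Kennedy argument via Hamana's minimal-projection construction, one shows that $I$ is itself $\Gamma$-injective as a $\Gamma$-operator system. The universal property of $I_{\Gamma}(C(X))=C(\partial_{F}(\Gamma,X))$ then supplies a $\Gamma$-equivariant u.c.p. embedding $C(\partial_{F}(\Gamma,X))\hookrightarrow I$, which by Hamana rigidity is multiplicative with respect to the Choi-Effros product and hence is a $*$-monomorphism. Combining this embedding with the unitaries $\lambda_{s}\in I$ yields a covariant representation of $(C(\partial_{F}(\Gamma,X)),\Gamma)$ inside $I$, integrating to a $*$-homomorphism from the full crossed product $C(\partial_{F}(\Gamma,X))\rtimes\Gamma$ into $I$. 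Since the action on $\partial_{F}(\Gamma,X)$ is amenable (Theorem \ref{b}(2)), full and reduced crossed products coincide, and this map descends to $C(\partial_{F}(\Gamma,X))\rtimes_{r}\Gamma\to I$; its injectivity follows from the faithfulness of the canonical conditional expectation on the reduced crossed product together with the injectivity of the restriction to $C(\partial_{F}(\Gamma,X))$.

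The main technical obstacle is precisely the $\Gamma$-injectivity of $I(C(X)\rtimes_{r}\Gamma)$ under the inner $\Gamma$-action, which is what guarantees the existence of the $\Gamma$-equivariant embedding $C(\partial_{F}(\Gamma,X))\hookrightarrow I$. The argument in Kalantar-Kennedy uses only that the $\Gamma$-action on the ambient $C^{*}$-algebra is implemented by unitaries living in that algebra, a feature shared by $C(X)\rtimes_{r}\Gamma$, so one expects their proof to go through verbatim. Once this delicate embedding step is in place, the remaining pieces — functoriality of the reduced crossed product under exactness, amenability supplied by Theorem \ref{b}, and faithfulness of the canonical conditional expectation — are standard and combine to yield the corollary.
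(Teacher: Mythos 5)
Your choice of $\mathcal{N}(C(X)\rtimes_{r}\Gamma)=C(\partial_{F}(\Gamma,X))\rtimes_{r}\Gamma$ and your use of Theorem \ref{b} for nuclearity are exactly what the paper does; the only divergence is in the second inclusion $C(\partial_{F}(\Gamma,X))\rtimes_{r}\Gamma\subseteq I(C(X)\rtimes_{r}\Gamma)$, which the paper disposes of in one line by citing Hamana \cite[3.4]{h85}, whereas you sketch a proof of that result from scratch along the lines of \cite[Theorem 4.6]{kk14}. Your outline is the right one, but two steps are softer than you present them. First, rigidity of the injective envelope does not by itself make the $\Gamma$-equivariant complete order embedding $C(\partial_{F}(\Gamma,X))\hookrightarrow I$ multiplicative for the ambient product of $I$: a completely isometric u.c.p.\ map carries its own Choi--Effros product, which need not agree with the product inherited from $I$, and showing that $I_{\Gamma}(C(X))$ actually sits as a $C^{*}$-subalgebra of $I(C(X)\rtimes_{r}\Gamma)$ is precisely the technical content of Hamana's Theorem 3.4. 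Second, the injectivity of the descended map on the reduced crossed product requires constructing a conditional expectation on (the image in) $I$ compatible with the canonical one, which again is part of Hamana's argument rather than a formality. Finally, a small inaccuracy: the inclusion $C(X)\rtimes_{r}\Gamma\subseteq C(\partial_{F}(\Gamma,X))\rtimes_{r}\Gamma$ does not need exactness --- the reduced crossed product always preserves $\Gamma$-equivariant inclusions of $C^{*}$-algebras; exactness concerns quotients. None of this invalidates your route, but the honest statement is that you are re-deriving the cited lemma, and the re-derivation as written leaves its two hardest points to ``the Kalantar--Kennedy argument goes through verbatim.''
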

\begin{proof}
Take $\mathcal{N}(C(X)\rtimes_{r}\Gamma)=C(\partial_{F}(\Gamma, X))\rtimes_{r}\Gamma$. Since $\Gamma$ is exact, by Theorem \ref{b}, $C(\partial_{F}(\Gamma, X))\rtimes_{r}\Gamma$ is nuclear and $C(X)\rtimes_{r}\Gamma$ is exact. The second inclusion follows from \cite[3.4]{h85}.
\end{proof}

The results in this section hold also for  arbitrary $\Gamma$-spaces. Thus (a slight modification of) the above corollary proves a recent  result of Buss, Echterhoff and Willett  \cite[Corolarry 8.4]{bew19}.



\begin{thebibliography}{9}
\bibitem{ab16} M.Amini and F. Behrouzi, \emph{$C^{*}$-simplicity and Ozawa conjecture for groupoid $C^{*}$-algebras, part I: injective envelops}, 2016, arXiv:1609.01097.	
	
\bibitem{a70} R. Azencott, \emph{Espaces de Poisson des groupes localement compacts}, Lecture Notes in Mathematics, vol. 148, Springer, Berlin, 1970.

\bibitem{bk19} A. Bearden and M. Kalantar, \emph{Topological boundaries of unitary representations}, 2019, arXiv:1901.10937.

\bibitem{b19} C. Borys, \emph{The Furstenberg boundary of a groupoid}, 2019, arXiv:1904.10062.

\bibitem{bkko17} E. Breuillard, M. Kalantar, M. Kennedy, and N. Ozawa, \emph{$C^{*}$-simplicity and the unique trace property for discrete groups}, Publ. Math. Inst. Hautes \'Etudes Sci. {126} (2017), 35-71.

\bibitem{bo08} N. Brown and N. Ozawa, \emph{$C^{*}$-algebras and finite-dimensional approximations}, Graduate Studies in Mathematics, vol. 88, American Mathematical Society, Providence, RI., 2008.

\bibitem{bew19} A. Buss, S. Echterhoff, and R. Willett, \emph{Injectivity, crossed products, and amenable group actions}, 2019, arXiv:1904.06771.

\bibitem{cm14} P. E. Caprace and N. Monod, \emph{Relative amenability}, 2014, arXiv:1309.2890.

\bibitem{ce77} M. D. Choi and E. Effros, \emph{Injectivity and operator spaces}, Journal of Functional Analysis {24} (1977), 156-209.

\bibitem{cp12} D. Creutz and J. Peterson, \emph{Stabilizers of ergodic actions of lattices and commensurators}, 2015, arXiv:1303.3949.

\bibitem{f63} H. Furstenberg, \emph{A Poisson formula for semi-simple Lie groups}, Annals of Mathematics {77} (1963), 335-386.

\bibitem{f73} H. Furstenberg, \emph{Boundary theory and stochastic processes on homogeneous spaces}, in Harmonic analysis on homogeneous spaces, Proceedings of the Symposium of Pure Mathematics, vol. 26, American Mathematical Society, Providence, RI., 1973.

\bibitem{g75} S. Glasner, \emph{Compressibility properties in topological dynamics}, American Journal of Mathematics {97} (1975), 148-171.

\bibitem{g76} S. Glasner, \emph{Proximal flows}, Lectures Notes in Mathematics, vol. 517, Springer-Verlag, Berlin, Heidelberg, New York, 1976.

\bibitem{hp11} D. Hadwin and V. Paulsen, \emph{Injectivity and projectivity in analysis and topology}, Science China Mathematics {54} (2011), 2347-2359.

\bibitem{h85} M. Hamana, \emph{Injective envelopes of $C^{*}$-dynamical systems}, Tohoku Mathemathical Journal {37} (1985), 463-487.

\bibitem{h00} N. Higson, \emph{Bivariant K-theory and the Novikov conjecture}, Geometric and Functional Analysis {10} (2000), 563-581.

\bibitem{hs98} N. Hindman and D. Strauss, \emph{Algebra in the Stone-Cech Compactification}, de Gruyter Expositions in Mathematics, vol. 27, Walter de Gruyter, New York, 1998.

\bibitem{kk14} M. Kalantar and M. Kennedy, \emph{Boundaries of reduced $C^{*}$-algebras of discrete groups}, J. Reine Angew. Math., doi:10.1515/crelle-2014-0111.

\bibitem{ks17} M. Kennedy and C. Schafhauser, \emph{Noncommutative boundaries and the ideal structure of reduced crossed products}, preprint 2017, arXiv:1710.02200.

\bibitem{k95} E. Kirchberg, \emph{On subalgebras of the $CAR$ algebra}, Journal of Functional Analysis {129} (1995), 35-63.

\bibitem{kw99} E. Kirchberg and S. Wasserman, \emph{Exact groups and continuous bundles of $C^{*}$-algebras}, Mathematische Annalen {315} (1999), 169-203.

\bibitem{lb17} Adrien Le Boudec, \emph{$C^{*}$-simplicity and the amenable radical}, Invent. Math. {209} (2017), no. 1, 159-174.

\bibitem{m19} N. Monod, \emph{Furstenberg boundaries for pairs of groups}, 2019, arXiv:1902.08513.

\bibitem{o00} N. Ozawa, \emph{Amenable actions and exactness for discrete groups}, Comptes Rendus de L'Acad\'emie des Sciences, Series I, Mathematics {330} (2000), 691-695.

\bibitem{w90} S. Wasserman, \emph{Tensor products of free group $C^{*}$-algebras}, Bulletin London Mathematical Society {22} (1990), 375-380.

\bibitem{w12} B. Weiss, \emph{Minimal models for free actions}, in Dynamical Systems and Group Actions, Contemp. Math., vol. 567, Amer. Math. Soc., Providence, RI, 2012, pp. 249-264.

\bibitem{z84} R. Zimmer, \emph{Ergodic theory and semisimple groups}, Birkhauser, 1984.

 \end{thebibliography}
\end{document}